\newtheorem{thm}{Theorem}[section]
\newtheorem{lem}{Lemma}[section]
\newtheorem{defn}{Definition}[section]
\newtheorem{conjecture}{Conjecture}[section]
\theoremstyle{remark}
\numberwithin{equation}{section}
\begin{document}
 \title{On the polynomial Wolff axioms}
\author{Nets Hawk Katz}
\address{California Institute of Technology, Pasadena CA, U.S.A.}
\email{nets@caltech.edu}
\thanks{Supported by NSF grant DMS 1565904 and by MINECO grants SEV-2015-0554 and MTM2017-85934-C3-1-P}

\author{Keith M. Rogers}

\address{Instituto de Ciencias Matem\'aticas CSIC-UAM-UC3M-UCM, Madrid, Spain}
\email{keith.rogers@icmat.es}

\begin{abstract}
We confirm a conjecture of Guth concerning the maximal number of $\delta$-tubes, with $\delta$-separated directions, contained in the $\delta$-neighborhood of a real algebraic variety. Modulo a factor of $\delta^{-\varepsilon}$,   we also prove  Guth and Zahl's generalized version for semialgebraic sets. Although the applications are to be found in harmonic analysis, the proof will employ deep results from algebraic and differential geometry, including Tarski's projection theorem and Gromov's algebraic lemma.
\end{abstract}

\maketitle

\section{Introduction}\label{introSection}

For $\delta>0$, we consider $\delta$-neighborhoods of unit line segments, arbitrarily positioned  in a compact subset of Euclidean space. We call these $\delta$-tubes and, to avoid introducing an extra parameter, they will be supported in a ball of radius two from now on. A formulation of the Kakeya conjecture seeks to estimate the number of $\delta$-tubes, pointing in $\delta$-separated directions, in terms of the Lebesgue measure of any set that contains them (take $\lambda=1$ in inequality \eqref{kakeq} below). 

Guth and Zahl showed how progress can be made via polynomial partitioning  \cite{GZ}, a technique introduced in \cite{GK}. This partitions the underlying space with the zero set of a polynomial of degree~$D$, after which a line cannot intersect more than $D+1$ of the resulting subsets. The problem is typically reduced to the harder case concerning what happens on or near the zero set. For this it is useful to know how many $\delta$-tubes, pointing in $\delta$-separated directions, can be contained in the $\delta$-neighborhood of the zero set. 
In relation to the closely related Fourier restriction problem, Guth made the following conjecture \cite[pp. 49]{G}.

\begin{conjecture}\label{guth} For all integers $n,D\ge 2$ and all $\varepsilon>0$, there is a constant $C(n,D,\varepsilon)>0$ so that the number of $\delta$-tubes, pointing in $\delta$-separated directions, contained in the $\delta$-neighborhood of an $m$-dimensional algebraic variety  $Z\subset \mathbb{R}^{n}$, of degree at most~$D$, is bounded by $C(n,D,\varepsilon)\delta^{1-m-\varepsilon}$. 
\end{conjecture}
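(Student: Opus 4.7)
The plan is to invoke Gromov's algebraic lemma to replace the variety by a bounded number of $C^r$-smooth charts, analyze tubes via the local tangent-plane geometry of those charts, and then iterate across scales to accommodate the $\delta^{-\eps}$ loss. First, Gromov's lemma covers $Z\cap B$ (where $B$ is a fixed ball in $\RR^n$) by $O_{n,D}(1)$ images of semialgebraic maps $\phi_i:(0,1)^m\to \RR^n$ with $\|\phi_i\|_{C^r}\le C(n,D,r)$, up to a proper subvariety of dimension strictly less than $m$ that is absorbed by induction on $m$. By pigeonhole, it suffices to bound, for each fixed smooth chart $\phi$, the number of $\delta$-separated directions admitting a $\delta$-tube contained in the $\delta$-neighborhood of $\phi((0,1)^m)$.

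Second, I would use the second-order smoothness of $\phi$ to approximate locally by affine planes. Partitioning $(0,1)^m$ into subcubes $Q$ of side $\rho$, Taylor's theorem gives that $\phi(Q)$ lies within $C\rho^2$ of an affine tangent $m$-plane $\Pi_Q$ at the center of $Q$. For the critical choice $\rho=\sqrt{\delta}$, the $\delta$-neighborhood of $\phi(Q)$ is contained in the $O(\delta)$-neighborhood of the flat plane $\Pi_Q$, so any tube lying in the former must point within $O(\sqrt{\delta})$ of $\Pi_Q$. Combined with the Lipschitz variation of $\Pi_Q$ in the base parameter $u$ (supplied by the bound $\|\phi\|_{C^2}\le C$), the direction of a unit-length tube must simultaneously lie close to the tangent plane over every subcube whose image its centerline traverses; for a perfectly flat $m$-plane this constraint reproduces the correct target count of $O(\delta^{1-m})$ directions.

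To promote this local flat-plane count to the global bound $\delta^{1-m-\eps}$ for curved varieties, I would run a multi-scale induction over $O(\log\log(1/\delta))$ levels, at each step passing from scale $\rho$ down to $\sqrt{\rho}$. At scale $\rho$ one considers the set of pairs $(v,x)\in S^{n-1}\times B$ for which the $\rho$-tube through $x$ in direction $v$ lies in the $\rho$-neighborhood of $Z$; by Tarski's projection theorem this set is semialgebraic with complexity depending only on $n$ and $D$, so Gromov's lemma can be reapplied afresh at each scale without growth of the effective parameters. Spreading the $\eps$-loss evenly across the scales, each step costs at most $\delta^{-\eps/\log\log(1/\delta)}$, and these factors multiply to the permitted $\delta^{-\eps}$.

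The main obstacle I anticipate is precisely this multi-scale bookkeeping: controlling how the $C^r$-norms, and hence the effective complexity of the sub-varieties at finer scales, evolve across iterations, and ensuring that tubes traversing several sub-pieces at each level are overcounted by at most a single $\delta^{-\eps}$ factor rather than a polynomial loss. A secondary difficulty is handling the singular locus of $Z$ where the smooth parameterizations degenerate; this is addressed by the induction on $m$ together with Tarski's theorem, which keeps the exceptional direction sets semialgebraic and thus amenable to the same machinery.
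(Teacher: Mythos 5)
Your proposal goes down a genuinely different road from the paper. Where you apply Gromov's lemma to the variety $Z$ itself and then try to exploit tangent-plane geometry, the paper instead applies Tarski's projection theorem to the \emph{set of admissible tubes} --- the pairs $(\mathbf{a},\mathbf{d})\in[0,1]^{2n}$ with $T_{\mathbf{a},\mathbf{d}}\subset S$ --- to show that this parameter set is semialgebraic of bounded complexity, extracts a semialgebraic section choosing one tube per direction, and only then runs Gromov's lemma, on this object living in $(\mathbf{a},\mathbf{d})$-space rather than on $Z$. This reparametrization is the decisive structural move: once the tube family is written as a single chart $(F_j,G_j):[0,1]^n\to[0,1]^{2n}$ with $\|(F_j,G_j)\|_{C^r}\le 1$, one replaces it by its degree-$(r-1)$ Taylor polynomial $(F,G)$ (with $r\gtrsim n^2/\varepsilon$), invokes B\'ezout to bound the multiplicity of the polynomial map $F+tG$ by $(r-1)^n$, and then uses a one-page algebraic estimate on the degree-$n$ polynomial $t\mapsto |(DF+t\,DG)(\mathbf{x})|$ to push the change-of-variables bound $|S|\gtrsim |I|^{n+1}|G(B)|$ through at a single scale. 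There is no multi-scale iteration anywhere in the argument.

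The gap in your proposal is precisely where you flag it. At scale $\rho=\sqrt\delta$ the $C^2$ Taylor bound only gives $O(\sqrt\delta)$ directional control from a single subcube, which overcounts the $\delta$-separated directions by a factor of roughly $\delta^{-(n-m)/2}$, and your outline does not explain what is actually propagated from one scale to the next so as to recover this factor. Re-running Tarski and Gromov at a finer scale yields, by itself, no new information: both theorems are scale-free, and Gromov's $C^r$ bound is already as strong as it gets at the top scale. The real content has to come from combining the $\sqrt\delta$-constraints across the $\delta^{-1/2}$ subcubes that a unit tube traverses, exploiting how the tangent $m$-planes drift along the tube; this is exactly the delicate geometry that Guth handled by hand in $n=3$ and that Zahl replaced with a discretized Severi-type theorem in $n=4$, and neither step generalizes obviously to arbitrary $n$ and codimension. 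The paper's proof sidesteps this entirely, which is why it works in all dimensions. (A secondary inaccuracy: in the paper the $\delta^{-\varepsilon}$ loss does not arise from spreading across iterated scales but from a single pigeonhole selecting a ball $B$ of diameter $\delta^{\varepsilon/2n}$ in the Gromov parameter domain, plus one dyadic pigeonhole in $k$ and a change from $S$ to $S_\eta$ in the final step; the core contradiction argument is otherwise loss-free.)
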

This was proven by Guth,  with $n=3$, yielding progress on the three-dimensional restriction conjecture~\cite{G1, H},  and  by Zahl, with $n=4$, yielding progress on the four-dimensional Kakeya conjecture~\cite{Z}, as well as the four-dimensional restriction conjecture~\cite{D}. Here we will prove Conjecture~\ref{guth} in all dimensions. This implies that $\delta$-neighborhoods of varieties cannot contradict the Kakeya conjecture. This should be compared with the {\it grains} decomposition of \cite{GZ} which tells us that a union of $\delta$-tubes can only have small measure if it has some algebraic structure.
On the other hand, in \cite{G} it is noted that a resolution of Conjecture~\ref{guth} would lead to further improvements for the restriction conjecture in higher dimensions. Indeed, the $k$-broad estimates of \cite{G} can be improved by arguing as in \cite{G1}, mapping from $L^\infty$ rather than $L^2$ so as to take advantage of Conjecture~\ref{guth} with $m=n-1$. This controls the $k$-broad norm with Lebesgue exponent $p=\frac{2n}{n-1}\frac{n(n+k)-k}{n(n+k)-n}$ and so the estimate with $k=(n+1)/2$ can be inputted into Proposition~9.1 from \cite{G} to provide an improved adjoint restriction estimate in the range $p>\frac{2n}{n-1}(1+\frac{n-1}{(3n-1)n})$ in odd dimensions. 

More generally, Guth and Zahl considered the following definition. In Wolff's original version, the semialgebraic sets $S$ are taken to be truncated $\delta$-neighborhoods of $2$-planes~\cite{W}. It is clear that $\delta$-tubes, pointing in $\delta$-separated directions, satisfy the Wolff axioms, and Guth and Zahl conjectured that they also satisfy the following stronger condition; see \cite[pp. 4]{GZ}.
\begin{defn}\label{defnTubeWolffAxioms} 
We say that sets $\mathbb{T}$ of $\delta$-tubes in $\mathbb{R}^n$ satisfy the polynomial Wolff axioms if, for every integer $E\ge 2$, there is a constant $C(n,E)>0$ so that
$$\#\big(\big\{ T \in \mathbb{T} :  |T \cap S| \geq \lambda |T| \big\}\big) \leq C(n,E) |S| \delta^{1-n} \lambda^{-n}$$
whenever $S$ is a semialgebraic set, of complexity at most $E$, and $\lambda\ge \delta>0$.
\end{defn}

We will prove the following theorem, confirming their conjecture up to a factor of $C_\varepsilon\delta^{-\varepsilon}$. The tubes are contained in a ball of $\mathbb{R}^n$, and the intersection of this with the $\delta$-neighborhood of an $m$-dimensional variety, of degree at most $D$, forms a semialgebraic set $S$ with complexity bounded in terms of~$n$ and $D$. Moreover, by Wongkew's lemma~\cite{Wo}, the measure of such an $S$ is bounded by $c(n,D)\delta^{n-m}$. Thus, Conjecture~\ref{guth} is proved  by taking $\lambda=1$ in the following theorem.

\begin{thm}\label{mainThm}
Let~$n,E\ge 2$ be integers and $\varepsilon>0$. Then there is a constant $C(n,E,\varepsilon)>0$ so that, for every set $\mathbb{T}$ of $\delta$-tubes in $\mathbb{R}^n$, pointing in  $\delta$-separated directions, 
\begin{equation}\label{kakeq}\#\big(\big\{ T \in \mathbb{T} :  |T \cap S| \geq \lambda |T| \big\}\big) \leq C(n,E,\varepsilon) |S| \delta^{1-n-\varepsilon} \lambda^{-n}\end{equation}
whenever $S$ is a semialgebraic set, of complexity at most $E$, and $\lambda\ge \delta>0$.
\end{thm}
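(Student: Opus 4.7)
The plan is to reduce the problem to counting $\delta$-separated points in a semialgebraic subset of the tube parameter space, whose complexity is controlled via Tarski's projection theorem, and then to bound this count using Gromov's algebraic lemma. I would parameterize tubes by $(v,a) \in S^{n-1}\times\mathbb{R}^{n-1}$, the direction and perpendicular offset. After dyadic pigeonholing on $|T\cap S|$ (absorbing the resulting $\log(1/\delta)$ factors into $\delta^{-\varepsilon}$), it suffices to treat a uniformly rich family with $|T\cap S|\asymp\lambda|T|$. The literal condition $|T\cap S|\geq\lambda|T|$ is not semialgebraic because volumes of semialgebraic sets are generally transcendental in the parameters; one replaces it by a semialgebraic proxy by representing ``$|T\cap S|$ is large'' through the existence of a bounded number of well-separated witness cells of controlled size inside $T\cap S$. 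Applying Tarski--Seidenberg quantifier elimination to the semialgebraic defining relations for $S$, the tube family, and the witness scheme then produces a semialgebraic set $A\subseteq S^{n-1}\times\mathbb{R}^{n-1}$, of complexity depending only on $n$ and $E$, containing the parameter of every rich tube.

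Next, I would apply Gromov's algebraic lemma to $A$, yielding a decomposition $A=\bigcup_{i=1}^{N}\phi_i([0,1]^{d_i})$ with $N$ bounded in $n$ and $E$, each $\phi_i$ a $C^k$ map with $\|\phi_i\|_{C^k}\leq 1$, and $k=k(\varepsilon,n)$ chosen large. Because $\phi_i$ has bounded derivatives, the $\delta$-covering number of each image is controlled, so on each piece the number of $\delta$-separated directions is bounded by a product of the dimension of the direction projection of the piece and the relevant transverse volume. The volume $|S|$ enters through the double-counting inequality
\[
N_{\mathrm{rich}}\cdot\lambda\delta^{n-1}\;\leq\;\sum_{T}|T\cap S|\;=\;\int_{S}\#\{T\ni x\}\,dx,
\]
which converts the direction-multiplicity bound into a bound on $N_{\mathrm{rich}}$ in terms of $|S|$. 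The factor $\lambda^{-n}$ should arise from iterating this estimate across $n$ transverse slicings---effectively an induction on the dimension of the Gromov pieces---while $\delta^{1-n-\varepsilon}$ accounts for the angular parameterization and the $\delta^{-\varepsilon}$ loss from pigeonholing and the $k$-dependence of Gromov's lemma.

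The principal obstacles I anticipate are: (a) constructing a semialgebraic proxy for the rich-tube condition whose complexity is independent of $\delta$, which requires a careful bounded-witness scheme rather than a naive $\sim\lambda/\delta$ discretization; and (b) extracting the sharp $\lambda^{-n}$ rather than the $\lambda^{-2}$ afforded by the Cauchy--Schwarz-based Kakeya method. Obstacle (b) seems to require an induction on $n$ or on the Gromov dimension $d_i$, applying the theorem to affine slices that arise naturally from the bounded-derivative parameterization, with the semialgebraic complexity of the slices controlled by Tarski. With these in hand, combining Tarski-bounded complexity with Gromov-bounded derivatives on each piece and summing should yield the claimed bound up to the unavoidable $\delta^{-\varepsilon}$.
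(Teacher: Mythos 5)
Your proposal correctly identifies the two main tools---Tarski's projection theorem to produce bounded-complexity semialgebraic subsets of tube-parameter space, and Gromov's algebraic lemma to parametrize them---but the two steps you flag as obstacles (a) and (b) are exactly where the argument lives, and your sketches of how to handle them diverge from what actually works. For (a), the paper does not build a ``bounded-witness scheme'' to make $|T\cap S|\geq\lambda|T|$ semialgebraic. Instead it passes to the genuinely semialgebraic \emph{containment} condition: if $|T\cap S|\geq\lambda|T|$ then some axial line segment $\ell\subset T$ has $|\ell\cap S|\geq\lambda$, and by B\'ezout $\ell\cap S$ has $O_E(1)$ connected components, so $T_\delta\cap S_\delta$ contains a $\sim\lambda\times\delta$ sub-tube. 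One therefore works with $L_S=\{(\mathbf a,\mathbf d): T_{\mathbf a,\mathbf d}\subset S\}$, which is semialgebraic of bounded complexity by Tarski; a second Tarski application extracts a semialgebraic \emph{section} of $L_S$ (one tube per direction), and the resulting $n$-dimensional set is what Gromov's lemma is applied to. The comparison of $|S_\eta|$ with $|S|$ is deferred to the very end and handled by Milnor--Thom plus Wongkew's lemma. None of this requires a witness scheme, and the section-extraction step is essential in a way your sketch does not anticipate.

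For (b), you acknowledge that your double-counting inequality only yields $\lambda^{-2}$ and propose an induction on $n$ or on the Gromov-piece dimension to recover $\lambda^{-n}$. That is not the mechanism, and I do not see how it would close. In the paper the $\lambda^{-n}$ comes out of a single analytic computation: write the Gromov chart as $(F_j,G_j)$ with $G_j$ parametrizing directions, replace it by its degree-$(r-1)$ Taylor polynomial $(F,G)$, and use that the tubes lie in $S$ to get $|S|\gtrsim\int_I|(F+tG)(B)|\,dt$. B\'ezout controls the multiplicity of the polynomial map $F+tG$, reducing this to an integral of the Jacobian $|\det(DF+tDG)(\mathbf x)|$, which for fixed $\mathbf x$ is a degree-$n$ polynomial in $t$ with leading coefficient $\det DG(\mathbf x)$. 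Factoring it over $\CC$ and discarding short exceptional $t$-intervals gives $|\det(DF+tDG)|\gtrsim (|I|/4n)^n|\det DG|$ on a majority of $I$, which after integrating produces the lower bound $|S|\gtrsim\lambda^{n+1}|G(B)|$ (working in $\RR^{n+1}$, hence $\lambda^n$ in $\RR^n$), and $|G(B)|$ is bounded below by $\delta^n$ times the tube count since the directions are $\delta$-separated. This single polynomial-Jacobian factoring step is the heart of the proof, and it is the piece your proposal is missing; a covering-number argument on the Gromov pieces would discard precisely the structure that makes it work.
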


The proof 
will employ deep results from both algebraic and differential geometry. On the one hand, we use quantifier elimination to build new semialgebraic sets with bounded complexity from known semialgebraic sets.  On the other hand, we will use Gromov's algebraic lemma to nicely parametrize semialgebraic sets of bounded complexity. 
Rather than apply Gromov's lemma to~$S$, we would like to apply it to our set of tubes, however this is not semialgebraic. We replace it with a semialgebraic version using quantifier elimination. We first consider all the tubes contained in $S$ and then take a semialgebraic section, the result being that the tubes are repositioned. This will be discussed in more detail in the following section.
 
 In the third section, we prove a simplified version of  Theorem~\ref{mainThm}, where the intersections of the tubes with $S$ contain truncated $\delta$-tubes of length $\lambda$, the advantage being that we can perform the previous steps to obtain a semialgebraic set of tubes. We bound $|S|$ below by the measure of the union of these tubes, each slice of which can be written in terms of the parametrisation given by Gromov's lemma. One part of the parametrisation maps into the directions (and there is no more than one tube for each direction), and the other part into the uncontrolled position of the tube. We approximate the parametrisation by a polynomial, allowing us to apply B\'ezout's theorem, in order to ensure that this uncontrolled part does not interfere too often.
 
In the final section, we complete the proof. This involves a further application of B\'ezout's theorem, a change of scales, and dyadic pigeonholing in order to obtain a version of \eqref{kakeq} with~$|S|$ on the right-hand side replaced by the measure of the $\delta^n$-neighborhood of $S$. The proof is then completed by bounding $|S_{\delta^n}|=|S_{\delta^n}\backslash S|+|S|$ by a constant multiple of $|S|$, an easy consequence of the Milnor--Thom theorem \cite{M} combined with Wongkew's lemma \cite{W}.

\vspace{0.5em}

\noindent{\bf Acknowledgements:} The first author would like to thank Josh Zahl for helpful discussions. In particular the proof of Lemma \ref{Extractingsection} came
from a conversation with him. The second author would like to thank Jonathan Hickman for helpful discussions regarding the application to restriction.

\section{Semialgebraic sets, quantifier elimination and Gromov's algebraic lemma}

Following \cite{BPR}, we say that the semialgebraic sets of $\mathbb{R}^n$ are the smallest family of sets, closed under finite unions, intersections, and complements, that contains both $\{x: P(x)=0\}$ and $\{x: Q(x) > 0\}$ for all  polynomials $P$ and $Q$.
We say that the complexity of a semialgebraic set is the smallest sum of the degrees of the polynomials appearing in a complete description of the set.

Perhaps the most fundamental result in the subject of semialgebraic sets is Tarski's projection theorem; see for example \cite{BPR}.

\begin{thm}[Tarski] \label{Tarski}
Let $\Pi $ be the orthogonal projection of $\mathbb{R}^n$ into its first $n-1$ coordinates. Then for every $E\ge 1$, there is a constant $C(n,E) > 0$ so that, for every  semialgebraic $S \subset \mathbb{R}^n$ of complexity at most $E$, the projection $\Pi (S)$ has complexity at most $C(n,E)$. 
\end{thm}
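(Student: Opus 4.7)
The plan is to establish Tarski's theorem via quantifier elimination, which is essentially equivalent to the projection statement since $\Pi(S) = \{y \in \mathbb{R}^{n-1} : \exists\, x_n \in \mathbb{R},\ (y,x_n) \in S\}$. First I would put the defining formula for $S$ into disjunctive normal form, writing $S$ as a finite union of intersections of sets of the form $\{x : \mathrm{sgn}\,P_{ij}(x) = \sigma_{ij}\}$; this increases complexity only by a bounded factor depending on $E$. Viewing each $P_{ij}$ as an element of $\mathbb{R}[y][x_n]$ with $y = (x_1,\ldots,x_{n-1})$, the problem reduces to the following: given a finite family $\mathcal{F} = \{F_1,\ldots,F_N\} \subset \mathbb{R}[y][x_n]$ and a sign vector $\sigma \in \{-1,0,1\}^N$, describe the set of $y$ for which some $x_n \in \mathbb{R}$ realizes $\sigma$ on $\mathcal{F}$.

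The central observation is that the realizability of such a sign pattern depends only on the signs, in $y$, of a finite auxiliary family of polynomials derived from $\mathcal{F}$. I would use the Thom encoding of real roots: each real root $\alpha$ of $F \in \mathcal{F}$ is uniquely identified by the sign vector of the derivatives $F', F'', \ldots, F^{(\deg F - 1)}$ evaluated at $\alpha$, and the compatibility of Thom codes across the family is read off from signs of resultants and subresultants of the $F_i$ (and their derivatives) with respect to $x_n$. Alternatively, one can run Sturm sequences to count sign changes on the real line. Either way, the auxiliary polynomials produced lie in $\mathbb{R}[y]$ and their degrees and number are bounded in terms of $\max_i \deg F_i$ and $N$, hence in terms of $n$ and $E$. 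Assembling the Boolean combinations that express realizability for each $\sigma$ gives a quantifier-free description of $\Pi(S)$.

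The main obstacle is the bookkeeping: one must verify that the final Boolean combination has total complexity bounded uniformly in $S$, rather than merely finite for each individual $S$. The hard part is not the existence of such an elimination procedure --- Sturm's theorem already provides it in principle --- but tracking how complexities propagate as one enumerates sign patterns and iterates subresultant computations, and then induction on $n$ for the general projection onto the first $n-1$ coordinates (though here only one coordinate is eliminated). Since the statement only requires existence of $C(n,E)$ with no explicit bound, crude worst-case estimates suffice; the sharper single-exponential bounds carried out in detail in \cite{BPR} would give much more, but are not needed for our applications.
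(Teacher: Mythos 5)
The paper does not prove this statement at all: it is quoted as a classical theorem with a pointer to \cite{BPR}, and your sketch is precisely the standard quantifier-elimination argument (DNF reduction, parametrized root/sign analysis via Thom encodings, Sturm sequences and subresultants, with uniform degree and cardinality bounds on the auxiliary polynomials in $y$) that is carried out in that reference. Your outline is correct, and your observation that only the existence of $C(n,E)$ is needed --- so crude worst-case tracking of complexities suffices --- matches how the result is used in the paper.
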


Noting that $(x_1,\dots,x_{n-1}) \in \Pi(S)$ if and only if $\exists\ (x_1,\dots,x_{n-1},x_n) \in S$, we associate this theorem with quantifiers.  Roughly speaking, Tarski's theorem tells us that any set described using semialgebraic sets and quantifiers is semialgebraic with complexity depending
only on the length of the description and the complexity of the semialgebraic sets used in the description.

For notational convenience we work in  $\mathbb{R}^{n+1}$ rather than $\mathbb{R}^{n}$. With $\lambda\ge\delta$ and $t_0\in[-2,2]$, we will consider truncated $\lambda\times \delta$-tubes in $\mathbb{R}^{n+1}$ defined by
$$T_{\mathbf{a},\mathbf{d}} = \big\{ (\mathbf{x}, t)\in \mathbb{R}^{n}\times[t_0,t_0+\lambda]:  |\mathbf{x} - \mathbf{a} -t \mathbf{d}| \leq \delta\big\},\qquad (\mathbf{a},\mathbf{d})\in[0,1]^{2n}.$$
Note that $T_{\mathbf{a},\mathbf{d}}$ is a semialgebraic set of fairly small complexity.

\begin{lem} \label{We'reinbusiness} Let $S \subset \mathbb{R}^{n+1}$ be a semialgebraic set of complexity at most $E$. Then
$$L_S := \big\{ (\mathbf{a},\mathbf{d})\in[0,1]^{2n}: T_{\mathbf{a},\mathbf{d}} \subset S \big\}$$
is a semialgebraic set of of complexity at most $C(n,E)$, a constant depending only on~$n$ and~$E$.
\end{lem}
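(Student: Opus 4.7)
The plan is a direct application of Tarski's projection theorem (Theorem~\ref{Tarski}), exploiting the quantifier-elimination philosophy highlighted in the paragraph just above the statement. The containment $T_{\mathbf{a},\mathbf{d}}\subset S$ is a universally quantified statement $\forall (\mathbf{x},t),\ (\mathbf{x},t)\in T_{\mathbf{a},\mathbf{d}}\Rightarrow (\mathbf{x},t)\in S$, whose negation is an existentially quantified polynomial condition, and Tarski converts such an existential condition back into a quantifier-free semialgebraic description.

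First I would assemble, inside $\mathbb{R}^{2n}\times\mathbb{R}^{n+1}$, the auxiliary set
$$A \;=\; \bigl\{\, ((\mathbf{a},\mathbf{d}),(\mathbf{x},t)) : (\mathbf{a},\mathbf{d})\in[0,1]^{2n},\ (\mathbf{x},t)\in T_{\mathbf{a},\mathbf{d}},\ (\mathbf{x},t)\notin S\,\bigr\}.$$
Each of the three conditions cutting out $A$ is semialgebraic of controlled complexity jointly in all $3n+1$ variables: the box $[0,1]^{2n}$ uses $4n$ linear inequalities; the condition $(\mathbf{x},t)\in T_{\mathbf{a},\mathbf{d}}$ is described by the two degree-$1$ inequalities $t_0\le t\le t_0+\lambda$ together with the single degree-$2$ inequality $|\mathbf{x}-\mathbf{a}-t\mathbf{d}|^{2}\le\delta^{2}$; and $(\mathbf{x},t)\notin S$ is the complement in $\mathbb{R}^{n+1}$ of a semialgebraic set of complexity at most $E$. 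Since the semialgebraic family is closed under intersection and complement, $A$ has complexity bounded by some $C_0(n,E)$.

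Next I would iterate Theorem~\ref{Tarski} a total of $n+1$ times, each time projecting out one of the coordinates $x_1,\dots,x_n,t$. The number of iterations is fixed (depending only on $n$), and each application multiplies the complexity by a constant that depends only on the dimension and the previous complexity, both of which are controlled by $n$ and $E$. The resulting projection $\Pi(A)\subset\mathbb{R}^{2n}$ is exactly the set of $(\mathbf{a},\mathbf{d})\in[0,1]^{2n}$ for which some point of $T_{\mathbf{a},\mathbf{d}}$ lies outside $S$, and hence
$$L_S \;=\; [0,1]^{2n}\setminus \Pi(A).$$
Taking this complement is another semialgebraic operation of bounded cost, yielding the desired bound $C(n,E)$ on the complexity of $L_S$. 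The only point to verify with care is the bookkeeping for the number of iterated projections and the cost per step, but since both are independent of $\delta$, $\lambda$, and $t_0$ (these appear only as coefficients in the defining polynomials), there is no real obstacle; the lemma is a clean instance of the quantifier-elimination principle.
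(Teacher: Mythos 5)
Your proposal is correct and follows essentially the same route as the paper: form the auxiliary set of tuples $((\mathbf{a},\mathbf{d}),(\mathbf{x},t))$ with $(\mathbf{x},t)\in T_{\mathbf{a},\mathbf{d}}$ but $(\mathbf{x},t)\notin S$, project out $(\mathbf{x},t)$ via Tarski's theorem, and take the complement in $[0,1]^{2n}$. Your explicit remark that the projection must be iterated coordinate-by-coordinate (with complexity controlled at each of the boundedly many steps) is a slightly more careful statement of what the paper does implicitly, but it is the same argument.
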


\begin{proof} This is an immediate consequence of Tarski's projection theorem. First  we  write $$L_S= \big\{ (\mathbf{a},\mathbf{d})\in[0,1]^{2n}\,:\,  (\mathbf{x},t)\in S\ \forall\ (\mathbf{x},t)\in T_{\mathbf{a},\mathbf{d}}\big\}.$$
We then define the clearly semialgebraic $Y$ by
$$Y =\big\{(\mathbf{a},\mathbf{d},\mathbf{x},t)\in [0,1]^{2n}\times\mathbb{R}^n\times[t_0,t_0+\lambda]\,:\, (\mathbf{x},t)  \notin S, \ (\mathbf{x},t)\in T_{\mathbf{a},\mathbf{d}} \big\}.$$
Writing 
$Z=\Pi (Y)$, where $\Pi$ is the projection $(\mathbf{a},\mathbf{d}, \mathbf{x}, t)\mapsto(\mathbf{a}, \mathbf{d})$, by Theorem~\ref{Tarski} we conclude that~$Z$ is semialgebraic of complexity depending only on~$n$ and $E$. The proof is completed by noting that $L_S$ is the complement of $Z$ in $[0,1]^{2n}$.
\end{proof}

Noting that $L_S$ is closed if $S$ is closed, we have shown that given a closed semialgebraic set, the set of tubes it contains is closed and semialgebraic. Next we will show that we can extract a section semialgebraically. That is, we can choose one tube for each direction.

\begin{lem} \label{Extractingsection} Let $S \subset \mathbb{R}^{2n}$ be a compact semialgebraic set of complexity at most $E$. Let~$\Pi$ be the orthogonal projection into the final $n$ coordinates 
$(\mathbf{a},\mathbf{d}) \mapsto \mathbf{d}.$
Then there is a constant $C(n,E)>0$, depending only on $n$ and $E$, and a semialgebraic set $Z$, of complexity at most~$C(n,E)$, so that
$$Z \subset S,\quad\quad
\Pi(Z)= \Pi(S),$$
and so that for each $\mathbf{d},$ there is at most one $\mathbf{a}$ with
$(\mathbf{a},\mathbf{d}) \in Z.$
\end{lem}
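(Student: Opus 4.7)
The natural approach is to extract a lexicographic section. For each $\mathbf{d}\in \Pi(S)$, the fiber $S_{\mathbf{d}} := \{\mathbf{a}\in\RR^n : (\mathbf{a},\mathbf{d}) \in S\}$ is a nonempty compact subset of $\RR^n$, so one can define coordinates $a_1^*(\mathbf{d}),\ldots,a_n^*(\mathbf{d})$ iteratively by setting $a_1^*$ to be the minimum of $a_1$ over $\mathbf{a} \in S_{\mathbf{d}}$, and then, for $j>1$, setting $a_j^*$ to be the minimum of $a_j$ over those $\mathbf{a} \in S_{\mathbf{d}}$ whose first $j-1$ coordinates coincide with $a_1^*,\ldots,a_{j-1}^*$. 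I would define $Z$ to be the graph $\{(\mathbf{a}^*(\mathbf{d}),\mathbf{d}) : \mathbf{d}\in \Pi(S)\}$. By construction $Z\subset S$; compactness guarantees that each minimum is attained, so $\Pi(Z) = \Pi(S)$; and lex-minimality guarantees that at most one $\mathbf{a}$ lies over any given $\mathbf{d}$.

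To verify that $Z$ has bounded complexity, I would write the membership condition $(\mathbf{a},\mathbf{d})\in Z$ as a logical combination of bounded length: the point lies in $S$, and for each $j\in\{1,\ldots,n\}$ there is no real $b<a_j$ and no tuple $(a_{j+1}',\ldots,a_n')$ such that $(a_1,\ldots,a_{j-1},b,a_{j+1}',\ldots,a_n',\mathbf{d})\in S$. Each such clause is built from $S$ by a bounded number of quantifier alternations, so Tarski's projection theorem, applied exactly as in the proof of Lemma~\ref{We'reinbusiness}, produces a semialgebraic set of complexity bounded solely in terms of $n$ and $E$. Intersecting the $n+1$ resulting sets yields $Z$ with complexity at most $C(n,E)$.

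The main technical point is the bookkeeping of complexity through the nested quantifiers: one invokes Tarski's theorem a number of times that depends only on $n$, and at each step the complexity is multiplied by a factor depending only on $n$ and the current complexity, so the final constant is still a function of $n$ and $E$ alone. A secondary point is that compactness is essential here: each intermediate set $\{\mathbf{a}\in S_{\mathbf{d}} : a_i=a_i^*\text{ for } i<j\}$ is a closed subset of a compact set, hence compact, so the minimum defining $a_j^*$ is always attained. Without compactness one could only take infima, and $Z$ might fail to lie in $S$.
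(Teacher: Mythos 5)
Your proposal is correct and follows essentially the same approach as the paper: both construct a lexicographic section (the paper chooses the maximal coordinate at each stage rather than the minimal, which is immaterial) and invoke Tarski's projection theorem to control the complexity of the resulting first-order definable set. The paper presents the construction iteratively, peeling off one $a_j$ coordinate at a time to produce $Z_1 \supset Z_2 \supset \cdots \supset Z_n = Z$, whereas you write the membership condition for $Z$ as a single conjunction of quantified clauses, but this is a difference of bookkeeping rather than of substance.
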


\begin{proof} It suffices to show that for the projection $\Pi_1$ defined by
$(\mathbf{a},\mathbf{d})\mapsto(a_2,\dots, a_n,\mathbf{d}),$
there is a constant $C(E)>0$ and a semialgebraic $Z_1$ of complexity at most $C(E)$,  so that
$$Z_1 \subset S,\quad\quad
\Pi_1(Z_1)=\Pi_1(S),$$
and so that for any $(a_2,\dots,a_n,\mathbf{d})$ there is at most one $a_1$ with $(\mathbf{a},\mathbf{d}) \in Z_1.$
Having done that, we obtain $Z_2$ by applying the same result to $Z_1$ with the first coordinate replaced by the second, obtain $Z_j$ from $Z_{j-1}$ with the first coordinate replaced
by the $j$th, and finally setting $Z=Z_n$.

It suffices to see that there is a semialgebraic choice of $Z_1$. Whenever $(a_2,\dots, a_n,\mathbf{d}) \in \Pi_1(S)$ we let $(\mathbf{a},\mathbf{d}) \in Z_1$ for $a_1$ the maximal value so that
$(\mathbf{a},\mathbf{d}) \in S$. More logically, we write
$$Z_1=\big\{(\mathbf{a},\mathbf{d}) \in S\,:\, x \leq a_1\ \forall\ (x,a_2, \dots,a_n,\mathbf{d}) \in S \big\}.$$
As before we introduce the clearly semialgebraic $Y$ defined by
$$Y=\big\{(x, \mathbf{a},\mathbf{d})\in \mathbb{R}\times S\,:\, x>a_1,\ (x,a_2, \dots,a_n,\mathbf{d}) \in S \big\},$$
and use Theorem~\ref{Tarski} to project $Y$ to its last $2n$ coordinates. We then recover $Z_1$, by taking the complement in $S$, to complete the proof.
\end{proof}

An elementary proof of the following algebraic lemma can be found in the work of Burguet~\cite{B}.

\begin{lem}[Gromov] \label{Burguet} For all integers $d, E, r \geq 1$, there exists $M(d, E, r) < \infty$ with the following
properties. For any compact semialgebraic  set $A \subset [0,1]^d$, of dimension $n$ and complexity at most~$E$, there exists
an integer $N\le M(E,d,r)$ and maps $\phi_1,\dots,\phi_N: [0,1]^n \longrightarrow [0,1]^d$ so that
$$\bigcup_{j=1}^N  \phi_j ([0,1]^n) =A\quad \text{and}\quad \|\phi_j\|_{C^r}:=\max_{|\alpha|\le r}\|\partial^\alpha\phi_j\|_{\infty} \leq 1.$$
\end{lem}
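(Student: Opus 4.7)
The plan is to follow Burguet's approach~\cite{B} and argue by a double induction: on the ambient dimension~$d$, and, within each dimension, on the complexity~$E$. The principal structural input is the \emph{cylindrical algebraic decomposition} of semialgebraic sets: any semialgebraic $A\subset[0,1]^d$ of complexity at most~$E$ can be written as a union of boundedly many cells, each of which is either the graph of a continuous semialgebraic function $\xi$ over a lower-dimensional semialgebraic base $B\subset[0,1]^{d-1}$, or an open band between two such graphs. This reduces the parametrization problem in $[0,1]^d$ to the parametrization of bases in $[0,1]^{d-1}$, together with a separate parametrization of the individual semialgebraic sections $\xi$.

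For the base case $d=1$, a semialgebraic set of complexity at most~$E$ is a union of at most $C(E)$ intervals or isolated points, each of which I would parametrize by an affine map. For the inductive step I would fix a top-dimensional cell $C\subset A$ lying above a base cell $B$, and apply the inductive hypothesis to~$B$ to obtain parametrizations $\psi_j:[0,1]^{\dim B}\to B$ with $\|\psi_j\|_{C^r}\le 1$. Composing with the defining semialgebraic section $\xi$ produces a candidate parametrization $\phi_j:=(\psi_j,\xi\circ\psi_j)$ of~$C$. There is no reason, however, for the $C^r$-norm of $\xi\circ\psi_j$ to be bounded by~$1$, since $\xi$ can have derivatives blowing up near the boundary of~$B$.

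The heart of the matter, and the main obstacle, is a \emph{reparametrization lemma}: any semialgebraic function $g:[0,1]^k\to[0,1]$ of complexity bounded in terms of $E$ can be made $C^r$-bounded after precomposition with one of boundedly many semialgebraic maps $\sigma_\ell:[0,1]^k\to U_\ell$ whose images cover $[0,1]^k$. In one variable, this follows from the observation that $g$ has boundedly many monotone pieces on each of which the derivatives behave like fixed negative powers of the distance to the endpoints; a monomial change of variables of sufficiently high degree then absorbs the singular behavior. In higher dimensions the reparametrization is done one coordinate at a time, invoking the inductive hypothesis on dimension in order to control the intermediate derivatives in the remaining variables.

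Finally, tracking the number of cells produced by the cylindrical decomposition at each inductive step, together with the number of reparametrizations needed at each stage to absorb derivative blow-up, yields an explicit bound $N\le M(d,E,r)$ depending only on the stated parameters and completes the induction. Control of the complexity of the intermediate semialgebraic sets produced along the way is ensured at each stage by Tarski's projection theorem (Theorem~\ref{Tarski}).
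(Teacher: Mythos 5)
The paper does not prove this lemma: it states it and cites Burguet~\cite{B} (and, for a weaker version, Yomdin), so there is no in-paper argument to compare yours against. Evaluated on its own terms, your sketch correctly identifies the skeleton of Burguet's argument: cylindrical decomposition into cells that are graphs or bands over lower-dimensional bases, double induction on ambient dimension and complexity, and a reparametrization lemma to tame the derivatives of the section maps $\xi$.

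Where you are underselling the difficulty is exactly the reparametrization step, which is the entire content of Burguet's paper. Two concrete gaps. First, the covering maps $\sigma_\ell:[0,1]^k\to U_\ell$ must themselves be semialgebraic \emph{and} satisfy $\|\sigma_\ell\|_{C^r}\le 1$, not merely cover $[0,1]^k$, because they get composed into the final parametrizations $\phi_j$; your statement of the lemma omits this and it is essential. Second, the one-variable claim --- that derivatives of a semialgebraic $g$ behave like fixed negative powers of the distance to endpoints and a monomial substitution absorbs them --- needs the Puiseux-type structure of one-variable semialgebraic functions, and the higher-dimensional ``one coordinate at a time'' reparametrization is precisely where Gromov's original sketch had a gap: after reparametrizing in the first variable one must re-establish $C^r$ control of the mixed partials involving the remaining variables, and this requires a carefully formulated inductive statement (not just the statement of the lemma itself). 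Closing that gap is what Burguet and Pila--Wilkie actually do. Finally, the bound on the number $N$ of pieces comes from the quantitative form of the cylindrical decomposition theorem, not from Tarski's projection theorem; Tarski controls the complexity of projections but does not by itself bound cell counts or the number of reparametrizing charts.
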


A weaker version of this was first proved by Yomdin \cite{Y}. It was first stated as presented here by Gromov.  The first detailed proof of this version appears to have been given by Pila and Wilkie~\cite{PW}.

\section{Proof of Theorem~\ref{mainThm} with $\lambda=1$}

For notational convenience we work in $\mathbb{R}^{n+1}$ rather than $\mathbb{R}^n$. As we can suppose that the $\delta$-tubes are contained in $B(0,2)$,  without loss of generality we can suppose that our semialgebraic sets $S\subset \mathbb{R}^{n+1}$ are compact.   We choose our coordinates so that a large proportion (at least a fraction~$1/4^{n}$) of our tubes have central lines segments that can be written as $(\mathbf{a},0)+t(\mathbf{d},1)$ with $t$ in an interval $I\subset[-2,2]$ and~$\mathbf{d}\in[0,1]^n$. Similarly, by translation if necessary,  we can also suppose that $\mathbf{a}\in[0,1]^n$. 

Recalling from the previous section that a $\lambda\times \delta$-tube is defined to be of the form
$$T_{\mathbf{a},\mathbf{d}}(\lambda,\delta) = \big\{ (\mathbf{x}, t)\in \mathbb{R}^{n}\times[t_0,t_0+\lambda]:  |\mathbf{x} - \mathbf{a} -t \mathbf{d}| \leq \delta\big\},\qquad (\mathbf{a},\mathbf{d})\in[0,1]^{2n},$$
our $\delta$-tubes always contain a $\lambda\times\delta$-tube with $\lambda=\tfrac{1}{2}(n+1)^{-1/2}$. Thus, to count the number of $\delta$-tubes entirely contained in $S$, it will suffice to prove the following theorem. This does not yet complete the proof of Theorem~\ref{mainThm}, as the intersection of $S$ with a $\delta$-tube need not contain a $\lambda\times \delta$-tube, for any $\lambda$, when the $\delta$-tube is not contained in $S$. In that case the mass  can be distributed along the length of the whole tube.

\begin{thm} \label{almostsimple}
Let~$n,E\ge 1$ be integers and $\varepsilon>0$. Then there is a constant $C(n,E,\varepsilon)>0$ so that, for every set $\mathbb{T}$ of $\lambda\times\delta$-tubes in $\mathbb{R}^{n+1}$, pointing in  $\delta$-separated directions, 
\begin{equation}\label{croc}\#\big(\big\{ T \in \mathbb{T} :  T\subset S \big\}\big) \leq C(n,E,\varepsilon) |S| \delta^{-n-\varepsilon} \lambda^{-n-1}\end{equation}
whenever $S$ is a semialgebraic set, of complexity at most $E$, and $\lambda\ge \delta>0$.
\end{thm}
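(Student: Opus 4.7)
The plan is to reduce the theorem to a multiplicity estimate on the $t$-slices of the tubes, and then to establish that multiplicity estimate via polynomial approximation and B\'ezout. I would begin by combining Lemmas~\ref{We'reinbusiness} and~\ref{Extractingsection} applied to
\[ L_S = \big\{(\mathbf{a},\mathbf{d}) \in [0,1]^{2n} : T_{\mathbf{a},\mathbf{d}}(\lambda,\delta) \subset S\big\}, \]
to produce a semialgebraic section $Z \subset L_S$ of complexity $O_{n,E}(1)$ with $\Pi(Z) = \Pi(L_S)$ and with at most one position for each direction. Each tube $T \in \mathbb{T}$ with $T \subset S$ is then represented by a unique point of $Z$, and since the projection onto $\mathbf{d}$ is injective on $Z$ we have $m := \dim Z \leq n$.

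Next, applying Lemma~\ref{Burguet} to $Z$ with $r = r(\varepsilon)$ large, I would cover $Z$ by the images of $N_0 = O_{n,E,r}(1)$ maps $\phi_j = (\pi_1^{(j)},\pi_2^{(j)}) : [0,1]^m \to [0,1]^{2n}$ of $C^r$-norm at most $1$. By pigeonhole, one of these, say $\phi_j$, parametrizes at least $N' := \#\mathbb{T}/N_0$ of our tubes; write $\phi_j(u_k) = (\mathbf{a}_k,\mathbf{d}_k)$. Since $\pi_2^{(j)}$ is $O_n(1)$-Lipschitz, $\delta$-separation of the directions $\mathbf{d}_k$ forces a comparable $\delta$-separation of the parameters $u_k$ in $[0,1]^m$.

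For the lower bound on $|S|$ I would slice the union of tubes by the $t$-coordinate. Writing $D_k(t) \subset \mathbb{R}^n$ for the $\delta$-disk slice of the $k$-th tube and $M(t) := \big\|\sum_k \mathbf{1}_{D_k(t)}\big\|_\infty$ for its pointwise multiplicity,
\[ |S| \geq \int_I \Big|\bigcup_k D_k(t)\Big|\,dt \geq c\,\delta^n\int_I \frac{N'}{M(t)}\,dt, \]
where $I$ is the time interval of length $\lambda$. Everything then reduces to showing that the set of times $t$ with $M(t) \leq \delta^{-\varepsilon}$ has measure at least $\lambda/2$: given this, $\int_I M(t)^{-1}\,dt \gtrsim \delta^{\varepsilon}\lambda$, which is more than enough to recover the stated bound (in fact with only a $\lambda^{-1}$ rather than $\lambda^{-n-1}$, a harmless gain).

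The multiplicity bound $M(t) \leq \delta^{-\varepsilon}$ on most $t$ is the main obstacle. I would approximate $\phi_j$ by a polynomial $P = (\widetilde\pi_1, \widetilde\pi_2)$ of degree $d$ with $\|P - \phi_j\|_\infty \leq \delta$; by Jackson-type approximation of $C^r$-functions this is possible with $d \leq C_{n,r}\delta^{-1/r}$, so taking $r$ large in terms of $\varepsilon$ gives $d \leq \delta^{-\varepsilon/(2n)}$. A crowding of $\mu$ parameters into a $\delta$-ball under $F_t(u) := \pi_1^{(j)}(u) + t\pi_2^{(j)}(u)$ becomes a crowding into an $O(\delta)$-ball under the polynomial map $\widetilde F_t(u) := \widetilde\pi_1(u) + t\widetilde\pi_2(u)$, and B\'ezout's theorem bounds such a preimage by $d^n \leq \delta^{-\varepsilon/2}$ whenever the Jacobian of $\widetilde F_t$ is nondegenerate (extended from the preimage of a point to that of a $\delta$-ball by a standard thickening). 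The step I expect to be hardest is controlling the exceptional set of times $t$ for which $\widetilde F_t$ degenerates on a substantial fraction of the parameters $u_k$: this set is semialgebraic of complexity bounded in terms of $d$, so Tarski's theorem together with B\'ezout should confine it, and dyadic pigeonholing over the level $\mu$ then yields the claim. Summing over the $N_0$ parametrizations completes the proof.
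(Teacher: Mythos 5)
Your skeleton (Tarski/quantifier elimination to make the family of contained tubes semialgebraic, extracting a section, Gromov's lemma, polynomial approximation, B\'ezout, slicing in $t$) is the same as the paper's, but the quantitative heart of your argument is wrong. The reduction to ``$M(t)\le\delta^{-\varepsilon}$ for a set of times of measure $\ge\lambda/2$'' cannot be carried out, because that statement is false when $\lambda$ is small, and the conclusion you would derive from it (the bound with $\lambda^{-1}$ in place of $\lambda^{-n-1}$, which you call a harmless gain) is itself false. Take a bush: let $S=\{(\mathbf{x},t): t^*\le t\le t^*+\lambda,\ \dist(\mathbf{x}-\mathbf{p},(t-t^*)[0,1]^n)\le\delta\}$, a semialgebraic set of complexity $O_n(1)$ with $|S|\lesssim\lambda^{n+1}$, which contains all $\sim\delta^{-n}$ tubes $T_{\mathbf{a},\mathbf{d}}(\lambda,\delta)$ through the point $(\mathbf{p},t^*)$ with $\delta$-separated $\mathbf{d}\in[0,1]^n$. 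Here the slice multiplicity satisfies $M(t)\gtrsim\min\big(\delta^{-n},|t-t^*|^{-n}\big)\ge\lambda^{-n}$ for \emph{every} $t$ in the time interval, so if $\lambda\le\delta^{\varepsilon/n}$ your multiplicity claim fails on all of $I$, and indeed $\#\{T\subset S\}\sim\delta^{-n}$ exceeds $C|S|\delta^{-n-\varepsilon}\lambda^{-1}\sim C\lambda^{n}\delta^{-n-\varepsilon}$. The factor $\lambda^{-n-1}$ in the theorem is genuinely needed to absorb such configurations, so any correct slicing argument must permit the slice map to compress by a factor as large as $\sim\lambda^{-n}$ on most of $I$.

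The same example shows why the step you flag as hardest does not go through as sketched: B\'ezout bounds the number of preimages of a \emph{point} where the Jacobian of $\widetilde F_t$ is nonvanishing, but it does not bound the number of $c\delta$-separated parameters sent into a common $O(\delta)$-ball unless you also have a quantitative \emph{lower} bound on the Jacobian determinant; a map whose Jacobian is tiny but nonzero (exactly what happens at times near the bush time, where $\det(DF+tDG)\sim|t-t^*|^n\det DG$) crowds arbitrarily many separated points into one $\delta$-ball, so the exceptional set is not merely where $\widetilde F_t$ degenerates, and no amount of Tarski-plus-pigeonholing on that degeneracy set will rescue the claim. Supplying this quantitative lower bound is precisely the paper's key step and is where the $\lambda^{n+1}$ enters: for each fixed parameter $\mathbf{x}$, $\det(DF+tDG)(\mathbf{x})$ is a degree-$n$ polynomial in $t$ with leading coefficient $\det DG(\mathbf{x})$, and factoring it gives $|\det(DF+tDG)(\mathbf{x})|\ge(|I|/4n)^n|\det DG(\mathbf{x})|$ for $t$ in a subset $I_{\mathbf{x}}\subset I$ with $|I_{\mathbf{x}}|\ge|I|/2$; combined with B\'ezout (multiplicity $(r-1)^n$ for the fixed-degree Taylor approximation) and the change of variables formula this yields $|S|\gtrsim_r\lambda^{n+1}|G(B)|$, and the count of tubes is compared with the \emph{measure} of directions $|G(B)|$ (each tube contributing a $\delta/2$-ball of directions), not with a pointwise multiplicity. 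If you want to keep your multiplicity formulation you would need to prove $M(t)\lesssim\delta^{-\varepsilon}\lambda^{-n}$ off a small set of times, and the natural route to that is exactly the Jacobian-in-$t$ factorization above; as written, your proposal has a genuine gap at this point.
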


\begin{proof}  We first cover the $t$-interval $[-2,2]$ with nonoverlapping intervals $I_k$ of length $\lambda/2$. The projection of each $T \in \mathbb{T}$ into the $(n+1)$th coordinate must contain some $I_k$. For each $T$, we choose such a $k$ and declare that $T \in \mathbb{T}_k$. We let 
$S_k$ be the subset of $S$ consisting of points whose $(n+1)$th coordinate is in $I_{k-1}\cup I_k\cup I_{k+1}$. Then it suffices to prove
\begin{equation*}\#\big(\big\{ T \in \mathbb{T}_k : T \subset S_k \big\}\big) \leq C(n,E,\varepsilon) |S_k| \delta^{-n-\varepsilon} \lambda^{-n-1}.\end{equation*}
Relabelling $\mathbb{T}_k, S_k$ and $I_k$ by $\mathbb{T}, S$ and $I$,  for the sake of a contradiction we assume that for all $C>0$, we can find sets $\mathbb{T}$ of $\lambda\times\delta$-tubes, pointing in  $\delta$-separated directions, and semialgebraic sets~$S$, of complexity bounded by $E$, such that
\begin{equation}\label{reptile}\#\big(\big\{ T \in \mathbb{T} :  T\subset S \big\}\big) > C|S| \delta^{-n-\varepsilon}\lambda^{-n-1}\end{equation}
for some $\lambda\ge \delta>0$. We can suppose that $|S|\ge \lambda\delta^{n}$ as otherwise $S$ would not contain a single tube. Note also that \eqref{croc} clearly holds when restricted to all $\delta>c>0$, by simply taking $C(n,E,\varepsilon)$ sufficiently large. Thus the $\delta$ for which \eqref{reptile} holds must tend to zero as $C$ tends to infinity. 

Now instead of counting the tubes of $\mathbb{T}$ directly, we first consider $L$ defined by $$L = \big\{ (\mathbf{a},\mathbf{d})\in[0,1]^{2n}: T_{\mathbf{a},\mathbf{d}}(\lambda, \delta/2) \subset S \big\},$$  the advantage being that we can apply Lemma~\ref{We'reinbusiness} to see that $L$ is  semialgebraic. Moreover, applying Lemma \ref{Extractingsection} to $L$, we obtain a semialgebraic section~$L'$ consisting of a single vector $(\bf{a},\bf{d})$ for each $\mathbf{d}$ appearing in  $L$.
Letting $\Pi$ denote the projection $(\mathbf{a},\mathbf{d})\mapsto \mathbf{d}$, we then have 
$$|\Pi(L')| > C|S| \delta^{-\varepsilon}\lambda^{-n-1}.$$
This is because for each $T_{\mathbf{a},\mathbf{d}}(\lambda, \delta)\in\mathbb{T}$ there is a whole $n$-dimensional ball $B(\mathbf{d},\delta/2)$ in $\Pi(L')$, and these balls are disjoint due to the fact that the directions of $\mathbb{T}$ are $\delta$-separated. 
Given that~$L'$ can be considered to be the graph of a function that maps from $\Pi(L')\subset [0,1]^n$, we see that~$L'$ is an $n$-dimensional subset of $[0,1]^{2n}$.

We apply Gromov's algebraic lemma,  Lemma \ref{Burguet}, to $L'$ with $r$ taken to be the first integer larger than $4n^2/\varepsilon$. This breaks $L'$ into $N$ pieces, with $N$ depending only on $n$, $E$ and $r$. For each piece~$L_j$,
there is a map 
$(F_j,G_j) : [0,1]^n \longrightarrow [0,1]^{2n},$
with
$$(F_j,G_j)([0,1]^n) = L_j\qquad\text{and}\qquad\|(F_j,G_j)\|_{C^{r}} \le 1.$$
By the pigeonhole principle, there is a choice of $j$ for which
\begin{equation*}|G_j([0,1]^n)|=| \Pi ( L_j) | >  C|S| \delta^{-\varepsilon}\lambda^{-n-1}.\end{equation*}
Moreover, we can find a ball $B\subset [0,1]^n$, centered at $\mathbf{x}_0$ and of diameter $\delta^{\frac{\varepsilon}{2n}}$, so that
\begin{equation}\label{annette}|G_j(B)| >  C|S| \delta^{-\varepsilon/2}\lambda^{-n-1}.\end{equation} 
We use this large set of directions  to find a lower bound on $|S|$ that will yield the contradiction.

First we replace $(F_j,G_j)$ by $(F,G)$, the $(r-1)$th degree  Taylor approximation of $(F_j,G_j)$ at~${\bf x}_0$. By the estimates on the $C^r$ norm of $(F_j,G_j)$, given by the Gromov algebraic lemma, we have
$$|(F_j,G_j)({\bf x})- (F,G)({\bf x})| \le |{\bf x}-{\bf x}_0|^r.$$
As we chose $r>4n^2/\varepsilon$, for $\mathbf{x}$ in our small ball $B$ centered at $\mathbf{x}_0$, this yields 
\begin{equation}\label{retro}|(F_j,G_j)({\bf x}) - (F,G)({\bf x})| \le \tfrac{1}{4}\delta^{2n}.\end{equation}
We see that continuous $G$ maps $B$ into the $\delta^{2n}$-neighborhood of~$G_j(B)$, and in particular the boundary of $B$ maps into the $\delta^{2n}$-neighborhood of the boundary of~$G_j(B)$. By \eqref{annette}, recalling that $|S|\ge \lambda\delta^{n}$, we can conclude that
\begin{equation}\label{tit}|G(B)| > C|S|\lambda^{-n-1}\end{equation}
whenever $C$ is sufficiently large so that $\delta^{2n}$ is sufficiently small.  If there are points $\mathbf{x}\in B$ for which the determinant of the Jacobian matrix $DG(\mathbf{x})$ is zero, they are mapped to a null set, by Sard's theorem. Thus we can remove them without affecting the validity of \eqref{tit}.

Now, by \eqref{retro} and the fact that the tubes are contained in $S$,  we have
 $(F({\bf x}) + t G({\bf x}),t) \in S$ for all $t\in I$ and~${\bf x} \in B$. Thus we can estimate
\begin{equation*}\label{rely}|S|\ge \int_I |( F + tG) (B)|\, dt.\end{equation*}
In order to contradict \eqref{tit}, we would like to bound this below by $\lambda^{n+1}|G(B)|$. Using the change of variables formula, this will follow from estimates for the Jacobian determinants, however
for fixed~$t$, it is not necessarily the case that $ F + tG$ is one-to-one. For this reason, we prepared a substitute, namely that $ F + tG$ is a polynomial in $n$ variables of degree $r-1$. By restricting ourselves to $B_t \subset B$,
defined to be the points ${\bf x}\in B$ where $(DF + tDG) ({\bf x})$ is invertible, the values of $F + tG$ are isolated, even after complexifying $F$ and $G$.
Thus, by B\'ezout's theorem, we see that $F + tG$ maps at most $(r-1)^n$ points of $B_t$ to the same place. Partitioning $B_t$ into sets $U_k$ on which $F+tG$ is one-to-one, by the change of variables formula, we obtain
$$
|(F + tG)(B_t)|\ge \frac{1}{(r-1)^n}\sum_{k}|(F + tG)(U_k)|=\frac{1}{(r-1)^n}\sum_{k}\int_{U_k}|(DF + tDG)(\mathbf{x})|\, d\mathbf{x}, 
$$
where $|(DF + tDG)(\mathbf{x})|$ denotes the absolute value of the determinant. Summing up and integrating in $t$, this yields
\begin{equation}\label{annette2}|S| \ge\frac{1}{(r-1)^n} \int_{I}\int_{B_t} | ( DF + tDG)({\bf x}) |\, d{\bf x}dt.\end{equation}
Note that there may be values of $t\in I$ for which $B_t$ is the empty set, however we will see that this cannot happen too often.

It remains to bound  $| ( DF + tDG) ({\bf x})|$ from  below in terms of $| DG({\bf x})|$. In order to do this,  we  first note  that
$$|  (DF + tDG) ({\bf x})| = |P_{\bf x}(t)|,$$
where $P_{\bf x}(t)$ is a polynomial of degree $n$.  Fixing ${\bf x}$ for the moment, we write
$$P_{\bf x}(t)=|DG({\bf x})|(t-r_1)(t-r_2) \dots (t - r_n),$$
where $r_1,\dots r_n$ may be complex numbers that depend on ${\bf x}$. We observe that for most of the $t \in I$, we have the estimate
$$|t-r_j| \geq \frac{|I|}{4n},\qquad j=1,\ldots,n.$$
Eliminating the exceptional intervals where this is not true,  we find a subset $I_{\bf x} \subset I$ with $|I_{\bf x}| > \frac{1}{2}|I|$, so that
$$|P_{\bf x} (t) | \geq \Big(\frac{|I|}{4n}\Big)^{n} |DG({\bf x})|,\qquad t \in I_{\bf x}.$$
Plugging this into \eqref{annette2} and applying Fubini's theorem, we conclude that
$$|S| \ge \frac{1}{(r-1)^n}\Big(\frac{|I|}{4n}\Big)^{n}\int_B  \int_{I_{\bf x}}    |DG({\bf x})|\,  dt d{\bf x}
\ge  \frac{1}{(r-1)^n} \Big(\frac{|I|}{4n}\Big)^{n} \frac{|I|}{2} |G(B)|.$$
Now, using our supposition \eqref{tit} and simplifying, recalling that $|I|=\lambda$, we obtain
$$1\ge \frac{1}{(r-1)^n} \Big(\frac{1}{4n}\Big)^{n} \frac{1}{2}C.$$
The $C$ appearing here is a constant multiple, depending only on $n$, $E$ and $\varepsilon$, of the constant appearing in \eqref{reptile}, which we take sufficiently large to obtain the desired contradiction. 
\end{proof}

\section{Proof of Theorem~\ref{mainThm} with $\lambda\ge \delta$}

The main difficulty in extending to the general case $\lambda\ge \delta$, is that  the condition 
$|T \cap S| \geq \lambda |T|$ is not semialgebraic. However, Theorem~\ref{almostsimple}  implies the following $\delta$-discretized version in which $S_{\delta}$ denotes the $\delta$-neighborhood of $S$ in $\mathbb{R}^{n+1}$.

\begin{thm}\label{prettysimple}
Let~$n,E\ge 1$ be integers and $\varepsilon>0$. Then there is a constant $C(n,E,\varepsilon)>0$ so that, for every set $\mathbb{T}$ of $\delta$-tubes in $\mathbb{R}^{n+1}$, pointing in  $\delta$-separated directions, 
\begin{equation*}\#\big(\big\{ T \in \mathbb{T} :  |T \cap S| \geq \lambda |T| \big\}\big) \leq C(n,E,\varepsilon) |S_\delta| \delta^{-n-\varepsilon} \lambda^{-n-1}\end{equation*}
whenever $S$ is a semialgebraic set, of complexity at most $E$, and $\lambda\ge \delta>0$.
\end{thm}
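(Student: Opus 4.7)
The plan is to reduce Theorem~\ref{prettysimple} to Theorem~\ref{almostsimple} by extracting from each tube $T$ with $|T\cap S|\geq\lambda|T|$ a genuine $\lambda'\times\delta$-sub-tube $T'\subset T$ that is contained in a mild fattening of $S$, where $\lambda'$ is comparable to $\lambda$ up to a constant depending only on $n$ and $E$. Since the assignment $T\mapsto T'$ will be injective and the sub-tubes will inherit the $\delta$-separation of directions, Theorem~\ref{almostsimple} applied to this new configuration will yield the desired bound after absorbing constants.

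First I would invoke Tarski's projection theorem to confirm that the neighborhoods $S_\delta$ and $S_{2\delta}$ are semialgebraic of complexity depending only on $n$ and $E$. Fix $T\in\mathbb{T}$ with $|T\cap S|\geq\lambda|T|$ and let $\pi\colon\mathbb{R}^{n+1}\to\mathbb{R}$ be the projection onto the axial coordinate of $T$. Then $\pi(T\cap S)$ is a semialgebraic subset of the line whose complexity is bounded in terms of $n$ and $E$, so by the one-dimensional Milnor--Thom bound it has at most $C(n,E)$ connected components. Combining the hypothesis $|T\cap S|\geq\lambda|T|$ with the trivial bound $|D_t\cap S|\leq|D_t|$ on each $\delta$-disk slice yields $|\pi(T\cap S)|\geq\lambda$, so by pigeonhole some connected component is an interval $I'$ of length at least $c(n,E)\lambda$.

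For every $t\in I'$ the disk $D_t$ meets $S$, hence the center of $D_t$ lies within $\delta$ of $S$ and every point of $D_t$ lies within $2\delta$ of $S$. Therefore the sub-tube $T':=T\cap\pi^{-1}(I')$ is a $(c(n,E)\lambda)\times\delta$-tube entirely contained in $S_{2\delta}$, and the map $T\mapsto T'$ is injective because distinct elements of $\mathbb{T}$ have distinct directions. Applying Theorem~\ref{almostsimple} to the semialgebraic set $S_{2\delta}$ at the scale $c(n,E)\lambda$ yields
$$\#\big\{T\in\mathbb{T}:|T\cap S|\geq\lambda|T|\big\}\leq\#\{T'\}\leq C(n,E,\varepsilon)|S_{2\delta}|\delta^{-n-\varepsilon}\lambda^{-n-1},$$
and one concludes by bounding $|S_{2\delta}|\leq C(n,E)|S_\delta|$ via a standard estimate for neighborhoods of semialgebraic sets of bounded complexity.

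The main technical obstacle is the extraction step, and in particular the degenerate regime in which $c(n,E)\lambda<\delta$ so that the extracted sub-tube is too short for Theorem~\ref{almostsimple} to apply. But in that regime $\lambda$ is within a constant factor of $\delta$, and the elementary incidence bound $\#\{T\}\lesssim|S|\delta^{-2n}\lambda^{-1}$, coming from $\sum_T|T\cap S|\lesssim|S|\delta^{-n}$, already dominates the target after constants are absorbed, so the argument closes by a short separate case analysis.
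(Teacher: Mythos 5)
Your proposal is correct and reduces to Theorem~\ref{almostsimple} in essentially the same way the paper does: extract from each tube a long sub-tube lying in a thin neighborhood of $S$, using a bounded-complexity argument to find a single long interval. The paper's version is slightly sharper on one point: instead of projecting all of $T\cap S$ to the axis, it first uses Fubini to pick a single line $\ell\subset T$ parallel to the axis with $|\ell\cap S|\ge\lambda$, then extracts a sub-interval of length $\gtrsim_E\lambda$ lying \emph{in $S$ itself}; the $\delta$-tube around that segment is then directly contained in $S_\delta$, so no doubling of the neighborhood radius occurs. Your version places the extracted sub-tube only in $S_{2\delta}$, and so needs the extra step $|S_{2\delta}|\le C(n)|S_\delta|$. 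That inequality is in fact true, but note it has nothing to do with semialgebraicity: a maximal $\delta$-separated set $\{x_i\}\subset S$ gives disjoint balls $B(x_i,\delta/2)\subset S_\delta$ while $S_{2\delta}\subset\bigcup_i B(x_i,3\delta)$, hence $|S_{2\delta}|\le 6^{n+1}|S_\delta|$ for every nonempty $S$; so the appeal to ``a standard estimate for neighborhoods of semialgebraic sets'' should be replaced by this elementary covering argument (or you could simply adopt the paper's fiber-line selection and land in $S_\delta$ directly). One other small remark: what you call the ``one-dimensional Milnor--Thom bound'' is really just the fact that a semialgebraic subset of $\mathbb{R}$ of complexity $\le C(n,E)$ is a union of $O_{n,E}(1)$ intervals and points (the paper phrases the analogous step as ``B\'ezout''); either phrasing is fine but neither is literally those theorems. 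Your separate treatment of the degenerate regime $c(n,E)\lambda<\delta$ is a welcome addition, since Theorem~\ref{almostsimple} requires the tube length to be at least $\delta$ and the paper glosses over this; your elementary count $\#\mathbb{T}_\lambda\lesssim|S|\delta^{-2n}\lambda^{-1}$ together with $|S_\delta|\gtrsim\delta^{n+1}$ does close that case.
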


\begin{proof} Observe that if $|T \cap S| \ge \lambda |T|$, then there is a unit line segment
$\ell\subset T$ whose direction is that of $T$ and for which $|\ell \cap S| \geq \lambda$, where $|\ell \cap S|$ denotes one-dimensional Lebesgue measure. Now $\ell \cap S$ breaks into at most $C(E)$ connected components by B\'ezout's theorem. Thus,~$\ell \cap S$ contains a line segment of length $C(E)^{-1}\lambda$, and so $T_{\delta} \cap S_{\delta}$ contains a $C(E)^{-1}\lambda \times \delta$
tube in the direction of~$T$. Now we apply Theorem \ref{almostsimple}, with $S$ replaced by~$S_{\delta}$, to complete the proof. \end{proof}

This would be enough to prove the full theorem if we could bound $|S_{\delta}|$ in terms of $|S|$. Unfortunately, we do not not always have the appropriate bounds, so first we prove the same result for $S_{\eta}$ for any $\eta \ge \delta^{2n}.$

\begin{thm}\label{Netsisntfunny}
Let~$n,E\ge 1$ be integers and $\varepsilon>0$. Then there is a constant $C(n,E,\varepsilon)>0$ so that, for every set $\mathbb{T}$ of $\delta$-tubes in $\mathbb{R}^{n+1}$, pointing in  $\delta$-separated directions, 
$$\#\big(\big\{ T \in \mathbb{T} :  |T \cap S| \geq \lambda |T| \big\}\big) \leq C(n,E,\varepsilon) |S_\eta| \delta^{-n-\varepsilon} \lambda^{-n-1}.$$
whenever $S$ is a semialgebraic set, of complexity at most $E$, and $\lambda\ge \delta \ge \eta\ge \delta^{2n}>0$.
\end{thm}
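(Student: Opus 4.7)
The plan is to adapt the proof of Theorem~\ref{prettysimple}, replacing $S_\delta$ on the right-hand side by the smaller $S_\eta$ at the cost of an absorbable $\delta^{-\varepsilon}$ factor. As in the proof of Theorem~\ref{prettysimple}, a further application of B\'ezout's theorem shows that if $|T\cap S|\geq\lambda|T|$ then $T\cap S$ contains a line segment $\ell_T$ of length $\gtrsim_E \lambda$ in the direction of $T$, and the $\eta$-neighborhood of $\ell_T$ is then a $c(E)\lambda\times\eta$-tube contained in $S_\eta$. I would count these tubes by adapting the Gromov-based volume argument from the proof of Theorem~\ref{almostsimple}, with $S$ replaced by the semialgebraic, complexity-bounded set $S_\eta$ and with $c(E)\lambda\times\eta$-tubes playing the role of $\lambda\times\delta$-tubes.

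The crucial point is to exploit the $\delta$-separation of directions (rather than the $\eta$-separation that Theorem~\ref{almostsimple} is stated with) directly in the analogue of the lower bound $|\Pi(L')|\gtrsim \#\mathbb{T}\cdot\delta^n$. Defining $L_{S_\eta}:=\{(\mathbf{a},\mathbf{d})\in[0,1]^{2n}: T_{\mathbf{a},\mathbf{d}}(c(E)\lambda,\eta/2)\subset S_\eta\}$ (which is semialgebraic by Lemma~\ref{We'reinbusiness} applied to the semialgebraic set $S_\eta$) and extracting a section $L'$ by Lemma~\ref{Extractingsection}, one checks that $B(\mathbf{d}_T,c\delta)\subset\Pi(L')$ and hence $|\Pi(L')|\gtrsim\#\mathbb{T}\cdot\delta^n$, provided that perturbing the direction of each tube by $c\delta$ keeps it inside $S_\eta$; this holds when $c\lambda\delta\lesssim\eta$.

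I expect the main obstacle to be the opposite regime $\lambda\delta>\eta$, which must be handled by a change of scales: one replaces $\ell_T$ by sub-segments of length $\lambda':=c\eta/\delta\leq\lambda$, for which the compatibility condition $\lambda'\delta\lesssim\eta$ does hold, and then uses dyadic pigeonholing on the $t$-position of the chosen sub-segment within $\ell_T$ (alongside a further use of B\'ezout to control the number of components of $\ell_T\cap S$) to convert the resulting bound at scale $(\lambda',\eta)$ back into one at $(\lambda,\eta)$, with only a logarithmic loss that is absorbed into the $\delta^{-\varepsilon}$ factor. The condition $\eta\geq\delta^{2n}$ enters exactly as in the proof of Theorem~\ref{almostsimple}: the Gromov parametrization from Lemma~\ref{Burguet} with $r>4n^2/\varepsilon$ has Taylor approximation error at most $\delta^{2n}\leq\eta$, which is small enough to guarantee that the approximated polynomial tubes remain inside a controlled fattening of $S_\eta$ throughout the volume/Jacobian computation, which then delivers the desired contradiction for sufficiently large $C$.
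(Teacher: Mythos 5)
Your Case~1 (the regime $\lambda\delta\lesssim\eta$) is sound in spirit: with the re-centering of $\mathbf{a}$, a $c\delta$ perturbation of the direction of the $\eta$-neighborhood of a segment of length $\lambda$ stays inside $S_\eta$ precisely when $\lambda\delta\lesssim\eta$, and the Gromov/B\'ezout volume argument then runs as in Theorem~\ref{almostsimple} (one quantitative caveat: the Taylor error must be negligible not just against the tube radius $\eta$ but against the volume lower bound $|S_\eta|\lambda^{-n-1}\gtrsim\eta^{n}$, which can be as small as $\delta^{2n^2}$; so $r>4n^2/\varepsilon$ giving error $\delta^{2n}$ is not enough and $r$ must be enlarged --- harmless, since $r$ only enters constants).

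The genuine gap is Case~2, $\lambda\delta>\eta$, which is the main content of the theorem (for $\eta$ near $\delta^{2n}$ and $\lambda\geq\delta$ your Case~1 is essentially vacuous). Shortening each segment to length $\lambda'=c\eta/\delta$ and pigeonholing/summing over the $t$-slabs of length $\sim\lambda'$ only recovers a single factor of $\lambda/\lambda'$: each tube contributes the same ball $B(\mathbf{d}_T,c\delta)$ in direction space for every slab, so multiplicity in $t$ cannot be converted into extra measure of $\Pi(L')$. Concretely, the slab argument yields at best $\#\mathbb{T}_\lambda\lesssim |S_\eta|\,\delta^{-n-\varepsilon}\lambda^{-1}(\lambda')^{-n}$, which is off from the target $\lambda^{-n-1}$ by the factor $(\lambda\delta/\eta)^{n}$, as large as $\delta^{-(2n-1)n}$ when $\eta=\delta^{2n}$ and $\lambda\sim1$; no logarithmic pigeonholing can absorb this. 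The missing idea (and the paper's route) is to refine the \emph{direction} scale rather than the length: split each fat $\delta$-tube into $\lesssim(\delta/\eta)^n$ thin $\eta$-tubes through a common $\eta$-ball, so that their directions are $\eta$-separated, distribute the mass of $T\cap S$ among them, dyadically pigeonhole in the occupancy fraction $2^{-k}\lambda$, and apply Theorem~\ref{prettysimple} with $\delta$ replaced by $\eta$ and $\lambda$ by $2^{-k}\lambda$. The multiplicity $(\delta/\eta)^n$ of thin tubes per fat tube then exactly compensates the weakening of the separation factor from $\delta^{-n}$ to $\eta^{-n}$, and only the harmless losses $\eta^{-\varepsilon}\le\delta^{-2n\varepsilon}$ and $\log_2\delta^{-1}$ remain; this is where the hypothesis $\eta\ge\delta^{2n}$ is actually used, rather than in the Taylor-error step you describe.
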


\begin{proof}  We replace $\mathbb{T}$ by $\mathbb{T}_{\eta}$, a set of $\eta$-tubes pointing in $\eta$-separated directions. Taking no more than $(\delta/\eta)^n$  many $\eta$-tubes $V\subset T$, all intersecting in some ball of radius $\eta$, we can position them so that they capture a good proportion of the mass of $T\cap S$;  
\begin{equation}\label{tried}
c_n\lambda \delta^{n}\le \sum_{V\subset T} |V\cap S|.
\end{equation}
Writing $\mathbb{T}_\lambda=\{ T \in \mathbb{T} :  |T \cap S| \geq \lambda |T| \}$ and partitioning  into subsets~$\mathbb{V}_{\! k}$ of thin tubes $V$ that satisfy
\begin{equation}\label{try}
2^{-k}\lambda|V|\le |V \cap S| < 2^{-k+1}\lambda|V|,
\end{equation}
there must be a set $\mathbb{V}_{\! k}$ with large cardinality compared to~$\mathbb{T}_\lambda$.  Then we apply Theorem \ref{prettysimple} to this~$\mathbb{V}_{\!k}$, with~$\delta$ replaced
by $\eta$ and $\lambda$ replaced by $2^{-k}\lambda$, giving 
\begin{equation}\label{why}
\#\mathbb{V}_{\!k} \le C(n,E,\varepsilon) |S_\eta| \eta^{-n-\varepsilon} \lambda^{-n-1}2^{k(n+1)}.
\end{equation}

It is straightforward to find the subset  $\mathbb{V}_{\! k}$ with large cardinality compared to $\mathbb{T}_\lambda$.  We use the upper bound in \eqref{try}, to see that
\begin{equation}\label{firstly}\sum_{V\in \mathbb{V}_{\!k}} |V\cap S|< \sum_{T\in\mathbb{T}_\lambda}\sum_{V\subset T}2^{-k+1}\lambda|V|\le  2^{-k+1}\lambda\delta^n\#\mathbb{T}_\lambda,
\end{equation}
where the second inequality is because there are less than $(\delta/\eta)^n$ thin tubes in each fat tube. 
On the other hand, by summing \eqref{tried}, we have \begin{equation}\label{ref}
c_n\lambda \delta^n\#\mathbb{T}_\lambda\le \sum_{k\ge \log_2\lambda^{-1}} \sum_{V\in \mathbb{V}_{\!k}} |V\cap S|.\end{equation}
Comparing \eqref{firstly} and \eqref{ref}, we see that the summands with large $k$ contribute little and so, by the pigeonhole principal, \eqref{ref} must continue to hold for a single $\mathbb{V}_k$, with $k\le C(n)$,  losing only a factor of $\log_2\lambda^{-1}$. Using the upper bound of~\eqref{try} and recalling that $\lambda\ge \delta$, this yields
 \begin{equation*}\label{this}\#\mathbb{T}_\lambda\le C(n)\log_2\delta^{-1}\#\mathbb{V}_{\!k}2^{-k+1} \eta^{n}\delta^{-n},\end{equation*}
which can be combined with \eqref{why} to complete the proof.
\end{proof}

We are finally in a position to complete the proof of the full theorem.

\begin{proof}[Proof of Theorem \ref{mainThm}]
 We can suppose that  $|S| \geq \lambda\delta^n$, because otherwise there are
no tubes $T$ with $|T \cap S| \geq \lambda |T|$. Given that $|S_\eta|\le |S|+ |S_{\eta} \backslash  S|$, after applying Theorem~\ref{Netsisntfunny}, it would suffice to bound the measure of  the $\eta$-neighborhood of the boundary of $S$. By the Milnor--Thom theorem,  this is contained in the $\eta$-neighborhood of at most $C(n,E)$ hypersurfaces of degree at most $E$; see for example \cite[Theorem 9]{HRR}. Thus, we can apply Wongkew's lemma~\cite{Wo} to obtain
$$|S_{\eta} \backslash  S| \le C(n,E)\eta.$$
Taking $\eta=\delta^{{n+1}}\le|S|$, as we may, completes the proof.
\end{proof}

\end{document}